\numberwithin{equation}{section}
\theoremstyle{definition}
\newtheorem{Def}{Definition}[section]
\theoremstyle{remark}
\theoremstyle{plain}
\newtheorem{Lem}[Def]{Lemma}
\newcommand{\dfn}{\mathrel{\dot{=}}}
\newcommand{\impl}{\Rightarrow}
\newcommand{\st}{ \ ; \ }
\newcommand{\rarr}{\rightarrow}
\newcommand{\sset}{\subset}
\newcommand{\TR}[5]{\begin{array}{c c c c c}
    {#1} & : & {#3} & \longrightarrow & {#5}\\
    & & {#2} & \longmapsto & {#4}
  \end{array}
}
\newcommand{\eset}{\emptyset}
\newcommand{\R}{\mathbb{R}}
\newcommand{\C}{\mathbb{C}}
\newcommand{\del}{\partial}
\DeclareMathOperator{\T}{T}
\newcommand{\dd}{\mathrm{d}}
\newcommand{\D}{\mathcal{D}}
\newcommand{\E}{\mathcal{E}}
\newcommand{\OO}{\mathcal{O}}
\DeclareMathOperator{\supp}{\mathrm{supp}}
\author{Gabriel Ara{\'u}jo}
\date{\today}
\title[Missing proofs]{Regularity and solvability of linear differential operators in Gevrey spaces: omitted proofs}
\begin{document}
\maketitle

This is an addendum to~\cite{araujo18} which aims to provide the proofs of some results in that paper (Theorem~7.5 and Proposition~9.15) which were removed from its final version. The reason for such omission is that these proofs follow quite closely others already present in the literature, with minor modifications. I make them publicly available for the sake of completeness.

\section{Proof of Theorem~7.5}

Our main reference here is H{\"o}rmander~\cite{hormander_alpdo2}. We will start by proving analogous versions of several auxiliary lemmas used in his book, which we did not find in the literature (especially the ones not covered by Bj{\"o}rck~\cite{bjorck66}). Although the proofs of these lemmas are very much like their counterparts in~\cite{hormander_alpdo2}, we chose to present them here for the sake of completeness. We will, however, make free use of the results already proven in~\cite{bjorck66}. 

For the first result in this section (which is an adaptation of~\cite[Theorem~10.1.5]{hormander_alpdo2}) we recall that for each $k \in \mathscr{K}_\omega$ one defines, in accordance with~\cite{bjorck66} and~\cite{hormander_alpdo2},
\begin{align*}
  M_k(\xi) &\dfn \sup_\eta \frac{k(\xi + \eta)}{k(\eta)} 
\end{align*}
Also, for $\lambda > 0$ let $\mathscr{K}_\omega^\lambda$ stand for the set of functions $k \in \mathscr{K}_\omega$ such that
\begin{align}
  k(\xi + \eta) &\leq e^{\lambda |\xi|^\frac{1}{\sigma}} k(\eta), \quad \forall \xi, \eta \in \R^n, \label{eq:K_lambda}
\end{align}
so $\mathscr{K}_\omega$ is exactly the union of all $\mathscr{K}_\omega^\lambda$.

\begin{Lem} \label{lem:kdelta0} For each $\lambda > 0$, each $k \in \mathscr{K}_\omega^\lambda$ and each $\delta > 0$ there exist $k_\delta \in \mathscr{K}_\omega^\lambda$ and $C_\delta > 0$ such that, for every $\xi \in \R^n$ one has
  \begin{enumerate}
  \item $1 \leq k_\delta(\xi) / k(\xi) \leq C_\delta$ and
  \item $1 \leq M_{k_\delta}(\xi) \leq e^{\delta|\xi|}$.
  \end{enumerate}
  \begin{proof} For $\delta > 0$ let
    \begin{align*}
      k_\delta(\xi) &\dfn \sup_\eta e^{-\delta |\eta|} k(\xi - \eta), \quad \xi \in \R^n,
    \end{align*}
    which defines and element of $\mathscr{K}_\omega^\lambda$. Indeed, for $\xi, \xi' \in \R^n$ we have
    \begin{align*}
      k_\delta(\xi + \xi') = \sup_\eta e^{-\delta |\eta|} k(\xi + \xi' - \eta) \leq \sup_\eta e^{-\delta |\eta|} e^{\lambda |\xi'|^\frac{1}{\sigma}}k(\xi - \eta) = e^{\lambda |\xi'|^\frac{1}{\sigma}}  k_\delta(\xi).
    \end{align*}
    Notice that
    \begin{align*}
      k(\xi) \leq k_\delta(\xi) = \sup_\eta e^{-\delta |\eta|} k(\xi - \eta) \leq \sup_\eta e^{-\delta |\eta|} e^{\lambda |\eta|^\frac{1}{\sigma}}  k(\xi) = k(\xi) \sup_\eta e^{\lambda |\eta|^\frac{1}{\sigma} - \delta |\eta|}
    \end{align*}
    where the constant on the far right (call it $C_\delta$) is finite, proving the first statement. A change of variables allows us to write, for $\xi \in \R^n$,
    \begin{align*}
      k_\delta(\xi) &= \sup_\eta e^{-\delta |\xi - \eta|} k(\eta)
    \end{align*}
    and so we have
    \begin{align*}
      k_\delta(\xi + \xi') = \sup_\eta e^{-\delta |\xi + \xi' - \eta|} k(\eta) \leq e^{\delta|\xi'|} \sup_\eta e^{-\delta |\xi - \eta|} k(\eta) = e^{\delta|\xi'|} k_\delta(\xi)
    \end{align*}
    thus implying that $M_{k_\delta}(\xi') \leq e^{\delta|\xi'|}$. 
  \end{proof}
\end{Lem}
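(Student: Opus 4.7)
The natural approach is the standard sup-convolution regularization that produces a weight dominating $k$ but enjoying a tame bound on $M$. Accordingly, my plan is to define
\[
  k_\delta(\xi) \dfn \sup_{\eta \in \R^n} e^{-\delta|\eta|} k(\xi - \eta), \quad \xi \in \R^n,
\]
and verify the two required properties in turn.

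I would first check that $k_\delta \in \mathscr{K}_\omega^\lambda$: applying \eqref{eq:K_lambda} to $k$ under the supremum gives $k(\xi + \xi' - \eta) \leq e^{\lambda|\xi'|^{1/\sigma}} k(\xi - \eta)$, and taking the supremum in $\eta$ yields the desired submultiplicativity for $k_\delta$. The inequality $k(\xi) \leq k_\delta(\xi)$ of part (1) follows by evaluating the supremum at $\eta = 0$. For the upper bound I would use \eqref{eq:K_lambda} in the form $k(\xi - \eta) \leq e^{\lambda|\eta|^{1/\sigma}} k(\xi)$ to extract $k(\xi)$ from the sup and reduce the problem to showing that
\[
  C_\delta \dfn \sup_{\eta \in \R^n} e^{\lambda|\eta|^{1/\sigma} - \delta|\eta|} < \infty.
\]
This is the only analytical step in the argument, and it relies crucially on the Gevrey condition $\sigma > 1$, which guarantees that the linear term $-\delta|\eta|$ dominates the subexponential weight as $|\eta| \to \infty$.

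For (2), I would perform the change of variable $\eta \mapsto \xi - \eta$ to obtain the alternative expression $k_\delta(\xi) = \sup_\eta e^{-\delta|\xi - \eta|} k(\eta)$. The triangle inequality $|\xi + \xi' - \eta| \geq |\xi - \eta| - |\xi'|$ then gives $e^{-\delta|\xi + \xi' - \eta|} \leq e^{\delta|\xi'|} e^{-\delta|\xi - \eta|}$, and taking the supremum yields $k_\delta(\xi + \xi') \leq e^{\delta|\xi'|} k_\delta(\xi)$, i.e., $M_{k_\delta}(\xi') \leq e^{\delta|\xi'|}$. The lower bound $M_{k_\delta}(\xi') \geq 1$ is immediate from the definition of $M$ with $\eta$ chosen to nearly attain the supremum defining $k_\delta(\xi')$.

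The only genuine obstacle I anticipate is the finiteness of $C_\delta$; the rest of the argument is formal manipulation of suprema and the triangle inequality. The structure mirrors Hörmander's construction for polynomially temperate weights, the sole difference being that the polynomial bound $(1 + |\xi|)^N$ is replaced here by the subexponential bound $e^{\lambda|\xi|^{1/\sigma}}$, and it is precisely at this point that the Gevrey index $\sigma > 1$ enters.
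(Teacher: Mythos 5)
Your proof is correct and follows essentially the same route as the paper's: you use the same sup-convolution definition $k_\delta(\xi) = \sup_\eta e^{-\delta|\eta|}k(\xi - \eta)$, the same verification of membership in $\mathscr{K}_\omega^\lambda$, the same derivation of $C_\delta$, and the same change of variables plus triangle inequality to bound $M_{k_\delta}$. Your explicit remark that finiteness of $C_\delta$ hinges on $\sigma > 1$ (so that the subexponential term loses to the linear one) is a useful clarification that the paper leaves implicit.
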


Now we present a version of~\cite[Lemma~13.3.1]{hormander_alpdo2}.
\begin{Lem} \label{lem:kdelta} Let $k \in \mathscr{K}_\omega$ and, for each $\delta > 0$, let $k_\delta \in \mathscr{K}_\omega$ be as in Lemma~\ref{lem:kdelta0}. Then for each $\phi \in \D_\omega(\R^n)$ there exists $\delta_0 > 0$ such that
  \begin{align*}
    \| \phi u \|_{p, k_\delta} &\leq 2 \| \phi \|_{1,1} \| u \|_{p, k_\delta}
  \end{align*}
  for every $0 < \delta < \delta_0$ and every $u \in \mathcal{B}_{p, k_\delta} = \mathcal{B}_{p, k}$.
  \begin{proof} From~\cite[Theorem~2.2.7]{bjorck66} we have, for every $\delta > 0$,
    \begin{align*}
      \| \phi u \|_{p, k_\delta} &\leq \| \phi \|_{1, M_{k_\delta}} \| u \|_{p, k_\delta}
    \end{align*}
    so it is enough to prove the existence of a $\delta_0 > 0$ such that
    \begin{align*}
      \| \phi \|_{1, M_{k_\delta}} &\leq 2 \| \phi \|_{1,1}
    \end{align*}
    for every $0 < \delta < \delta_0$. But from the definition of the norms we have 
    \begin{align*}
      \| \phi \|_{1, M_{k_\delta}} = \frac{1}{(2 \pi)^n} \int M_{k_\delta}(\xi) \ |\hat{\phi}(\xi)| \ \dd \xi \to \frac{1}{(2 \pi)^n} \int |\hat{\phi}(\xi)| \ \dd \xi = \| \phi \|_{1,1}
    \end{align*}
    because $M_{k_\delta} \to 1$ uniformly on compact set as $\delta \to 0^+$: this follows immediately from Lemma~\ref{lem:kdelta0}, which also implies that $\mathcal{B}_{p, k_\delta}$ and $\mathcal{B}_{p, k}$ are the same as topological vector spaces, since their norms are equivalent.
  \end{proof}
\end{Lem}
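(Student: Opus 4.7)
The plan is to peel off the multiplier $\phi$ using the general $\mathcal{B}_{p,k}$ multiplication theorem and then reduce matters to a norm-convergence statement that one can attack by dominated convergence. More precisely, I would first invoke~\cite[Theorem~2.2.7]{bjorck66}, applied with the weight $k_\delta$, to obtain the universal bound
\begin{align*}
  \|\phi u\|_{p, k_\delta} \leq \|\phi\|_{1, M_{k_\delta}}\, \|u\|_{p, k_\delta}.
\end{align*}
This reduces the lemma to producing $\delta_0 > 0$ such that $\|\phi\|_{1, M_{k_\delta}} \leq 2 \|\phi\|_{1,1}$ for every $0 < \delta < \delta_0$, which in turn follows from the convergence $\|\phi\|_{1, M_{k_\delta}} \to \|\phi\|_{1,1}$ as $\delta \to 0^+$.

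Next, writing out
\begin{align*}
  \|\phi\|_{1, M_{k_\delta}} = \frac{1}{(2\pi)^n} \int M_{k_\delta}(\xi)\, |\hat{\phi}(\xi)|\, \dd \xi,
\end{align*}
I would apply Lebesgue's dominated convergence theorem. The pointwise convergence $M_{k_\delta}(\xi) \to 1$ at every fixed $\xi$ is immediate from the sandwich $1 \leq M_{k_\delta}(\xi) \leq e^{\delta |\xi|}$ furnished by Lemma~\ref{lem:kdelta0}. For an integrable majorant I would invoke the crucial feature (already encoded in Lemma~\ref{lem:kdelta0}) that $k_\delta$ remains in $\mathscr{K}_\omega^\lambda$ with the same $\lambda$ as $k$; hence~\eqref{eq:K_lambda} yields the uniform-in-$\delta$ estimate $M_{k_\delta}(\xi) \leq e^{\lambda |\xi|^{1/\sigma}}$. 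Since $\phi \in \D_\omega(\R^n)$, the standard Beurling--Bj{\"o}rck decay of $\hat\phi$ renders $e^{\lambda |\xi|^{1/\sigma}} |\hat\phi(\xi)|$ integrable, giving the required majorant independently of $\delta$.

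The final piece of the statement, namely the identification $\mathcal{B}_{p, k_\delta} = \mathcal{B}_{p, k}$ as topological vector spaces, is a direct corollary of the two-sided estimate $k \leq k_\delta \leq C_\delta k$ in Lemma~\ref{lem:kdelta0}: the induced norms are equivalent, so the underlying Banach spaces coincide up to equivalence.

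I do not foresee any serious obstacle. The only mildly delicate point is the production of the dominating function: one must notice that the parameter $\lambda$ is preserved when passing from $k$ to $k_\delta$, so that a single majorant $e^{\lambda |\xi|^{1/\sigma}} |\hat\phi(\xi)|$ controls the whole family $\{M_{k_\delta} \, |\hat\phi|\}_{\delta > 0}$. Once this uniform control is in hand, the rest of the argument is entirely routine.
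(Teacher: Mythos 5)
Your proposal follows essentially the same route as the paper: invoke~\cite[Theorem~2.2.7]{bjorck66} to reduce to showing $\| \phi \|_{1, M_{k_\delta}} \to \| \phi \|_{1,1}$, then use the properties of $k_\delta$ from Lemma~\ref{lem:kdelta0} to establish the limit, and finish by noting the norm equivalence from $k \leq k_\delta \leq C_\delta k$. The one place where you diverge, and in fact improve on the paper's exposition, is the passage to the limit: the paper appeals only to uniform convergence of $M_{k_\delta}$ to $1$ on compacts, which by itself does not control the tail of $\int M_{k_\delta} |\hat\phi|\,\dd\xi$, whereas you supply the missing ingredient explicitly by observing that $k_\delta$ stays in $\mathscr{K}_\omega^\lambda$ with the same $\lambda$, so $M_{k_\delta}(\xi) \leq e^{\lambda|\xi|^{1/\sigma}}$ uniformly in $\delta$ and $e^{\lambda|\xi|^{1/\sigma}}|\hat\phi(\xi)|$ is an integrable majorant because $\phi \in \D_\omega(\R^n)$. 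This dominated-convergence formulation is the cleaner way to justify the step and closes the small gap left implicit in the paper.
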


Now we proceed with the proof of Theorem~7.5 from~\cite{araujo18}. We shall not reproduce its statement here. Due to~\cite[Lemma~13.1.2]{hormander_alpdo2} there exist operators with constant coefficients $P_1(D), \ldots, P_r(D)$ and functions $c_0, c_1, \ldots, c_r \in C^\infty(\Omega)$, that are uniquely determined by the following properties:
\begin{itemize}
\item $P_j \prec P_0$ for every $j \in \{1, \ldots, r\}$;
\item $c_j(x_0) = 0$ for every $j \in \{0, \ldots, r\}$;
\item and, in $\Omega$,
  \begin{align*}
    P(x,D) &= P_0(D) + \sum_{j = 1}^r c_j(x) P_j(D).
  \end{align*}
\end{itemize}
Since we are also assuming that the coefficients of $P(x,D)$ belong to $G^{\sigma_0}(\Omega)$ one can actually show that $c_0, c_1, \ldots, c_r \in G^{\sigma_0}(\Omega)$. For every $\epsilon > 0$, define
\begin{align*}
  X_\epsilon &\dfn \{ x \in \R^n \st |x - x_0| < \epsilon \}
\end{align*}
and select $\epsilon_0 > 0$ such that $X_{\epsilon_0} \sset \Omega$. Let $\chi \in G^{\sigma_0}_c(\R^n)$ be equal to $1$ in a neighborhood of $\{ x \in \R^n \st |x| \leq 2 \epsilon_0 \}$ and 
\begin{align*}
  E_0 &\in B_{\infty, \tilde{P}_0}^{\mathrm{loc}}(\R^n)
\end{align*}
be a fundamental solution of $P_0(D)$, and define
\begin{align*}
  F_0 \dfn \chi E_0 \in B_{\infty, \tilde{P_0}}.
\end{align*}
If $g \in \E'_\omega(\R^n)$ has its support in $X_{\epsilon_0}$ then $F_0 * g = E_0 * g$ in $X_{\epsilon_0}$, hence
\begin{align*}
  P_0(D) (F_0 * g) = F_0 * P_0(D)g = g \ \text{in $X_{\epsilon_0}$}.
\end{align*}
Now let $\psi \in G^{\sigma_0}_c(\R^n)$ be such that
\begin{align*}
  \psi &= 1 \ \text{in $\{ x \in \R^n \st |x| \leq 1 \}$} \\
  \psi &= 0 \ \text{in $\{ x \in \R^n \st |x| > 2 \}$}
\end{align*}
and define $\psi_\epsilon(x) \dfn \psi((x - x_0) / \epsilon)$. We claim the existence of $0 < \epsilon_1 < \epsilon_0 / 2$ such that for each $0 < \epsilon < \epsilon_1$ and each $f \in \E'_\omega(\R^n)$ the equation
\begin{align}
  g + \sum_{j = 0}^r \psi_\epsilon c_j P_j(D) (F_0 * g) &= \psi_\epsilon f \label{eq:fix_point}
\end{align}
has a unique solution $g \in \E'_\omega(\R^n)$. Proceeding as in~\cite{hormander_alpdo2}, we provisionally assume this claim and define the operator $E$ as
\begin{align*}
  Ef &\dfn F_0 * g
\end{align*}
where $g\in \E'_\omega(\R^n)$ is the unique solution of~\eqref{eq:fix_point}, which yields a linear map $E: \E'_\omega(\R^n) \rarr \E'_\omega(\R^n)$: we will prove that if $\epsilon > 0$ is small enough then this operator has the properties described in the statement above. 

First, since $\supp \psi_\epsilon \sset X_{\epsilon_0}$  equation~\eqref{eq:fix_point} implies that $\supp g \sset X_{\epsilon_0}$, so in $X_\epsilon$ 
\begin{align*}
  P(x,D) E f &= P(x, D) (F_0 * g) \\
  &= P_0(D) (F_0 * g) + \sum_{j = 0}^r c_j P_j(D) (F_0 * g) \\
  &= g + \sum_{j = 0}^r \psi_\epsilon c_j P_j(D) (F_0 * g) \\
  &= \psi_\epsilon f \\
  &= f
\end{align*}
thus proving the first property claimed. Second, let $u \in \E'_\omega(\R^n)$ be such that $\supp u \sset X_\epsilon$ and $f \dfn P(x,D)u$: putting $g \dfn P_0(D)u$ in the left-hand side of~\eqref{eq:fix_point} we get
\begin{align*}
  g + \sum_{j = 0}^r \psi_\epsilon c_j P_j(D) (F_0 * g) &= P_0(D)u + \sum_{j = 0}^r \psi_\epsilon c_j P_j(D) (F_0 * P_0(D)u) \\
  &= P_0(D)u + \sum_{j = 0}^r \psi_\epsilon c_j P_j(D) u \\
  &= P(x,D) u \\
  &= f \\
  &= \psi_\epsilon f
\end{align*}
that is, $g$ solves equation~\eqref{eq:fix_point}, and by uniqueness we have
\begin{align*}
  Ef = F_0 * g = F_0 * P_0(D)u = u.
\end{align*}
This proves the second property of $E$.

The last property of $E$ -- the estimate between norms -- will follow from the proof of our claim about existence and uniqueness of solutions of equation~\eqref{eq:fix_point}, so now we proceed in that direction. For every $\epsilon > 0$ we define a linear map $A_\epsilon: \D'_\omega(\R^n) \rarr \D'_\omega(\R^n)$ by the expression
\begin{align*}
  A_\epsilon g &\dfn \sum_{j = 0}^r \psi_\epsilon c_j P_j(D) (F_0 * g)
\end{align*}
which is well-defined for every $g \in \D'_\omega(\R^n)$, for $F_0$ is compactly supported. Let $k \in \mathscr{K}_\omega$ and, for $\delta > 0$, let $k_\delta \in \mathscr{K}_\omega$ as in Lemma~\ref{lem:kdelta} (in which case $\mathcal{B}_{p, k_\delta} = \mathcal{B}_{p, k}$, with equivalent defining norms): according to it, there exists $\delta_0 > 0$ such that if $0 < \delta < \delta_0$ one has
\begin{align*}
  \| A_\epsilon g \|_{p, k_\delta} &\leq \sum_{j = 0}^r \| \psi_\epsilon c_j P_j(D) (F_0 * g) \|_{p, k_\delta} \\
  &\leq 2 \sum_{j = 0}^r \| \psi_\epsilon c_j \|_{1,1} \| P_j(D) (F_0 * g) \|_{p, k_\delta}
\end{align*}
as long as $P_j(D) (F_0 * g) \in \mathcal{B}_{p, k}$ (recall that $\psi_\epsilon c_j \in \D_\omega(\R^n)$ for every $j \in \{ 0, \ldots, r \}$ according to~\cite[Lemma~7.4]{araujo18}. Now, since $P_j \prec P_0$ and $F_0 \in B_{\infty, \tilde{P}_0}$ there are constants $C_1, C_2 > 0$ such that
\begin{align*}
  |P_j(\xi)| |\hat{F}_0(\xi)| \leq |\tilde{P}_j(\xi)| |\hat{F}_0(\xi)| \leq C_1 |\tilde{P}_0(\xi)| |\hat{F}_0(\xi)| \leq C_1 C_2
\end{align*}
for every $\xi \in \R^n$, so if we define $C \dfn C_1 C_2 > 0$ we have that
\begin{align*}
  \| P_j(D) (F_0 * g) \|_{p, k_\delta} = \| k_\delta \ P_j \ \hat{F}_0 \ \hat{g} \|_{L^p} \leq C \| k_\delta \ \hat{g} \|_{L^p} = C \| g \|_{p, k_\delta}
\end{align*}
for every $g \in \mathcal{B}_{p, k}$: therefore
\begin{align*}
  \| A_\epsilon g \|_{p, k_\delta} &\leq 2C \sum_{j = 0}^r \| \psi_\epsilon c_j \|_{1,1} \| g \|_{p, k_\delta}
\end{align*}
and thus $A_\epsilon: \mathcal{B}_{p, k} \rarr \mathcal{B}_{p, k}$ continuously. Now~\cite[Lemma~13.3.2]{hormander_alpdo2} allows us to choose $0 < \epsilon_1 < \epsilon_0 / 2$ such that
\begin{align*}
  \sum_{j = 0}^r \| \psi_\epsilon c_j \|_{1,1} &\leq \frac{1}{4C}
\end{align*}
for every $0 < \epsilon < \epsilon_1$. We stress that such a choice is independent of $k$, and hence
\begin{align}
  \| A_\epsilon g \|_{p, k_\delta} &\leq \frac{1}{2} \| g \|_{p, k_\delta} \label{eq:est_Aeps}
\end{align}
for every $g \in \mathcal{B}_{p, k}$. We conclude that $I + A_\epsilon: \mathcal{B}_{p, k} \rarr \mathcal{B}_{p, k}$ is invertible, which means that equation~\eqref{eq:fix_point} has a unique solution $g \in \mathcal{B}_{p, k}$ whenever $f \in \mathcal{B}_{p, k}$, which must have compact support for reasons already mentioned. We need one more lemma to finish this argument.

\begin{Lem} \label{lem:compact_supp_bjorck} Let $1 \leq p \leq \infty$. Every $u \in \E'_\omega(\R^n)$ belongs to $\mathcal{B}_{p,k}$ for some $k \in \mathscr{K}_\omega$.
  \begin{proof}[Proof of Lemma~\ref{lem:compact_supp_bjorck}] For $u \in \E'_\omega(\R^n)$, \cite[Theorem~1.8.14]{bjorck66} ensures, among other things, the existence of constants $\lambda \in \R$ and $C > 0$ such that
    \begin{align*}
      |\hat{u}(\xi)| &\leq C e^{\lambda |\xi|^{\frac{1}{\sigma}}}, \quad \forall \xi \in \R^n.
    \end{align*}
    Of course we can assume $\lambda > 0$, so $k(\xi) \dfn e^{-2\lambda |\xi|^{\frac{1}{\sigma}}}$ defines an element of $\mathscr{K}_\omega$ and 
    \begin{align*}
      k(\xi) |\hat{u}(\xi)| &\leq C e^{-\lambda |\xi|^{\frac{1}{\sigma}}}, \quad \forall \xi \in \R^n,
    \end{align*}
    so $k \hat{u} \in L^p(\R^n)$ (i.e.\ $u \in  \mathcal{B}_{p,k}$) no matter what $p$ is.
  \end{proof}
\end{Lem}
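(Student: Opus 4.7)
The plan is to use a Paley--Wiener type bound for compactly supported $\omega$-ultradistributions to control $\hat{u}$ pointwise, and then to build $k$ as an exponentially decaying weight that is just strong enough to place $k \hat{u}$ in $L^p$ for every $p \in [1, \infty]$ simultaneously, while still belonging to $\mathscr{K}_\omega$.

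First, I would invoke~\cite[Theorem~1.8.14]{bjorck66}, applied to $u \in \E'_\omega(\R^n)$, to obtain constants $C > 0$ and $\lambda \in \R$ with $|\hat{u}(\xi)| \leq C e^{\lambda |\xi|^{1/\sigma}}$ for all $\xi \in \R^n$. Enlarging $\lambda$ if needed, I may assume $\lambda > 0$. The natural candidate is then
\begin{align*}
  k(\xi) &\dfn e^{-2 \lambda |\xi|^{1/\sigma}},
\end{align*}
which at once gives $k(\xi) |\hat{u}(\xi)| \leq C e^{-\lambda |\xi|^{1/\sigma}}$. The right-hand side is bounded and, for finite $p$, one has $\int e^{-p \lambda |\xi|^{1/\sigma}} \, \dd \xi < \infty$ by an elementary polar-coordinate substitution, so $k \hat{u} \in L^p(\R^n)$ for every $p \in [1, \infty]$.

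The remaining task is to verify that $k \in \mathscr{K}_\omega$, i.e.\ an estimate of the form~\eqref{eq:K_lambda}. For this I would compute directly
\begin{align*}
  \frac{k(\xi + \eta)}{k(\eta)} &= \exp \bigl( -2 \lambda ( |\xi + \eta|^{1/\sigma} - |\eta|^{1/\sigma} ) \bigr)
\end{align*}
and invoke the subadditivity of $t \mapsto t^{1/\sigma}$ on $[0, \infty)$ (which holds because $1/\sigma \leq 1$), applied to $|\eta| = |(\xi + \eta) - \xi| \leq |\xi + \eta| + |\xi|$, to get $|\eta|^{1/\sigma} \leq |\xi + \eta|^{1/\sigma} + |\xi|^{1/\sigma}$. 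Rearranging yields $k(\xi + \eta) \leq e^{2 \lambda |\xi|^{1/\sigma}} k(\eta)$, placing $k$ in $\mathscr{K}_\omega^{2\lambda}$.

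Nothing in this argument is really delicate: the main potential pitfall is locating the precise form of Björck's Paley--Wiener estimate for compactly supported $\omega$-ultradistributions and checking that the resulting weight sits in $\mathscr{K}_\omega$ rather than merely in some larger class of tempered weights. Once the estimate is in hand, the rest is essentially a one-line verification and the membership $u \in \mathcal{B}_{p,k}$ follows by the very definition of this Björck-type space.
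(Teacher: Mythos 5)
Your argument is exactly the paper's: the same invocation of Bj\"orck's Theorem~1.8.14 to get $|\hat{u}(\xi)| \leq C e^{\lambda|\xi|^{1/\sigma}}$, the same choice of weight $k(\xi) = e^{-2\lambda|\xi|^{1/\sigma}}$, and the same conclusion. You additionally spell out the verification that $k \in \mathscr{K}_\omega$ (via subadditivity of $t \mapsto t^{1/\sigma}$, yielding $k \in \mathscr{K}_\omega^{2\lambda}$) and that $k\hat{u} \in L^p$ for all $p$; the paper leaves both of these routine checks implicit.
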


Now we turn back to the deduction of estimate~(7.2) in the statement of the theorem (see~\cite{araujo18}). Let $f \in \E'_\omega(\R^n) \cap \mathcal{B}_{p,k}$ and take $g \in \E'_\omega(\R^n) \cap \mathcal{B}_{p,k}$ the unique solution of~\eqref{eq:fix_point}: by~\eqref{eq:est_Aeps} we have
\begin{align*}
  \| g \|_{p, k_\delta} &\leq 2 \| \psi_\epsilon f \|_{p, k_\delta} 
\end{align*}
thus
\begin{align*}
  \| Ef \|_{p, \tilde{P}_0 k_\delta} = \| F_0 * g \|_{p, \tilde{P}_0 k_\delta} \leq \| F_0 \|_{\infty, \tilde{P}_0} \| g \|_{p, k_\delta} \leq 2 \| F_0 \|_{\infty, \tilde{P}_0} \| \psi_\epsilon f \|_{p, k_\delta} \leq 4 \| F_0 \|_{\infty, \tilde{P}_0} \| \psi_\epsilon \|_{1,1} \| f \|_{p, k_\delta}
\end{align*}
where we used Lemma~\ref{lem:kdelta} again. On the other hand, Lemma~\ref{lem:kdelta0} ensures that the norms $\| \cdot \|_{p, k_\delta}$ and $\| \cdot \|_{p, k}$ are equivalent: an explicit calculation actually shows that  
\begin{align*}
  \| u \|_{p, k} \leq \| u \|_{p, k_\delta} \leq C_\delta \| u \|_{p, k}, \quad \forall u \in \mathcal{B}_{p, k}.
\end{align*}
In the same manner one obtains
\begin{align*}
  \| u \|_{p, \tilde{P}_0 k} \leq \| u \|_{p, \tilde{P}_0 k_\delta} \leq C_\delta \| u \|_{p, \tilde{P}_0 k}, \quad \forall u \in \mathcal{B}_{p, k}
\end{align*}
so now we have
\begin{align*}
  \| Ef \|_{p, \tilde{P}_0 k} \leq \| Ef \|_{p, \tilde{P}_0 k_\delta} \leq 4 \| F_0 \|_{\infty, \tilde{P}_0} \| \psi_\epsilon \|_{1,1} \| f \|_{p, k_\delta} \leq 4 C_\delta \| F_0 \|_{\infty, \tilde{P}_0} \| \psi_\epsilon \|_{1,1} \| f \|_{p, k}.
\end{align*}
\qed

\section{Proof of Proposition~9.15}

In this section we follow very closely the arguments in~\cite[pp.~53--56]{ct91}; this is indeed the ``Gevrey version'' of them. Again, the reader is referred to our main article for the statement of Proposition~9.15, which we shall not recall here.

We assume that $g$ and $u$ are such that $\supp \dd g \sset U_0^-$ and $\supp \dd u \sset U_0^+ \cap V_0$: the other case (i.e.~the opposite choice of signs) can be treated analogously. First of all, compactness of $\overline{U}$ ensures the existence of a constant $A > 0$ (which does not depend on $x_0$) such that
  \begin{align*}
    |\Phi(x,t) - \Phi(x_0,t)| &\leq A |x - x_0|, \quad \forall (x,t) \in U.
  \end{align*}
  
  Fix some $\phi \in G^\sigma(\C)$ and define
  \begin{align}
    \phi^\sharp \dfn Z^*\phi = \phi \circ Z \label{eq:dfn_phisharp} 
  \end{align}
  which belongs, for instance, to $G^\sigma(U)$ since $Z$ is a real-analytic map. Denoting by
  \begin{align*}
    \TR{\pi}{(x,t)}{\R \times \R^n}{t}{\R^n}
  \end{align*}
  the projection onto the $t$-variable, we have $U_0 = \pi(U)$ since $U$ is cylindrical, and so 
  \begin{align*}
    \pi^* g &\in G^\sigma(U; \wedge^{q - 1} \C T^* \R^{n + 1}).
  \end{align*}
  This observation allows us to define
  \begin{align}
    F &\dfn \phi^\sharp \wedge \dd \bar{Z} \wedge \pi^* g \label{eq:dfn_F}
  \end{align}
  which belongs to $G^\sigma(U; \wedge^q \C T^* \R^{n + 1})$ and, recalling that over $\Omega$ we have an identification $\wedge^{q} \C T^* \R^{n + 1} \cong \Lambda^{0, q}  \oplus \T'^{1, q - 1}$ we can define $f \in G^\sigma(U; \Lambda^{0, q})$ as the (unique) component of $F$ in that direct sum. We claim that if the support of $\phi$ is conveniently chosen we can achieve $\dd' f = 0$ i.e.~$\dd F$ will be a section of $\T'^{1, q}$. Indeed, without extra assumptions we have
  \begin{align*}
    \dd F &= \dd \left( \phi^\sharp \wedge \dd \bar{Z} \right) \wedge \pi^* g - \phi^\sharp \wedge \dd \bar{Z} \wedge \dd \left( \pi^* g \right)\\
    &= \dd \phi^\sharp \wedge \dd \bar{Z} \wedge \pi^* g - \phi^\sharp \wedge \dd \bar{Z} \wedge \pi^* (\dd g).
  \end{align*}
  However
  \begin{align*}
    \dd \phi^\sharp = \dd (Z^* \phi) = Z^* (\dd \phi) = Z^* \left( \frac{\del \phi}{\del z} \wedge \dd z + \frac{\del \phi}{\del \bar{z}} \wedge \dd \bar{z} \right) = \left(  \frac{\del \phi}{\del z} \circ Z \right) \wedge \dd Z +  \left(  \frac{\del \phi}{\del \bar{z}} \circ Z \right) \wedge \dd \bar{Z} 
  \end{align*}
  hence
  \begin{align*}
    \dd \phi^\sharp \wedge \dd \bar{Z} \wedge \pi^* g &= \left(  \frac{\del \phi}{\del z} \circ Z \right) \wedge \dd Z \wedge \dd \bar{Z} \wedge \pi^* g
  \end{align*}
  is a section of $\T'^{1, q}$ over $U$: if we can prove that $\phi^\sharp \wedge \dd \bar{Z} \wedge \pi^* (\dd g)$ is also a section of $\T'^{1, q}$ then our claim will follow. This is where the choice of $\phi$ (or, rather, its support) kicks in: we can choose it so that this summand is actually zero.

  Indeed, let $a > 0$ and $b \in \R$ and define the strip
  \begin{align*}
    E(a,b) &\dfn \{ x + i y \in \C \st |x - x_0| \leq a, \ y \geq b \}.
  \end{align*}
  From the definition of $U_0^-$ we have
  \begin{align*}
    \pi^{-1} (U_0^{-}) &= \{ (x,t) \in U \st \Phi(x_0,t) < y_0 \} \\
    Z^{-1}(E(a,b)) &= \{(x,t) \in U \st |x - x_0| \leq a, \ \Phi(x,t) \geq b \}
  \end{align*}
  and if $(x,t) \in Z^{-1}(E(a,b)) \cap \pi^{-1}(U_0^-)$ then
  \begin{align*}
    b \leq \Phi(x,t) \leq |\Phi(x,t) - \Phi(x_0,t)| + \Phi(x_0,t) < A|x - x_0| + y_0 \leq Aa + y_0.
  \end{align*}
  So if we choose $a,b$ such that $y_0 + Aa \leq b$ then $Z^{-1}(E(a,b)) \cap \pi^{-1}(U_0^-) = \eset$. In particular, choosing $\supp \phi \sset E(a,b)$ yields
  \begin{align*}
    \supp \phi^\sharp = \supp Z^* \phi = Z^{-1}(\supp \phi) \sset Z^{-1}(E(a,b)).
  \end{align*}
  Since we already had
  \begin{align*}
    \supp \pi^* (\dd g) = \pi^{-1}(\supp \dd g) \sset \pi^{-1}(U_0^-)
  \end{align*}
  for $\supp \dd g \sset U_0^-$ by hypothesis, we must have $\supp \phi^\sharp$ and $\supp \pi^* (\dd g)$ disjoint, hence $\phi^\sharp \wedge \dd \bar{Z} \wedge \pi^* (\dd g)$ vanishes in $U$. We conclude that $\dd' f = 0$.

  We introduce a new parameter $r > 0$ (to be specified later) and let $\chi \in G^\sigma_c(\R)$ be such that $0 \leq \chi \leq 1$ and
  \begin{align*}
    \chi(x) &= 1 \ \text{if $|x - x_0| < r/2$},\\
    \chi(x) &= 0 \ \text{if $|x - x_0| > r$}.
  \end{align*}
  Let also $\tilde{\chi} \in G^\sigma(\R \times \R^n)$ be defined as
  \begin{align*}
    \tilde{\chi}(x,t) &\dfn \chi(x), \quad (x,t) \in \R \times \R^n,
  \end{align*}
  hence
  \begin{align*}
    v &\dfn \tilde{\chi} \wedge \dd Z \wedge \pi^* u
  \end{align*}
  is a section of $\Lambda^{1,n - q}$ with $G^\sigma$ coefficients. Since $\supp u \sset V_0$ we have that
  \begin{align*}
    \supp v \sset \supp \tilde{\chi} \cap \pi^{-1}(\supp u) \sset \{(x,t) \in V \st |x - x_0| \leq r, \ t \in \supp u \}
  \end{align*}
  the latter being a compact subset of $V$ if we choose $r > 0$ sufficiently small: in that case $v \in G^\sigma_c(V;\Lambda^{1,n - q})$. It follows from all the definitions that 
  \begin{align*}
    f \wedge v &= F \wedge v = \phi^\sharp \wedge \dd \bar{Z} \wedge \pi^* g \wedge \tilde{\chi} \wedge \dd Z \wedge \pi^* u = \pm \left( \tilde{\chi} \ \phi^\sharp \right) \wedge \dd Z \wedge \dd \bar{Z} \wedge \pi^*(g \wedge u).
  \end{align*}
  We remark that the first identity follows from the fact that $f - F$ is a section of $\T'^{1, q - 1}$ (so its wedge with $v$ is zero) and that the correct sign in the last identity is irrelevant for our purposes: we are only interested in studying the vanishing of their integrals. Also, recalling that $\supp \phi^\sharp \sset Z^{-1}(E(a,b))$ and that $\tilde{\chi}{(x,t)} = 1$ if $|x - x_0| < r/2$, it is clear that if we further impose that $a < r/2$ then $\tilde{\chi} = 1$ on $\supp \phi^\sharp$, and hence
  \begin{align*}
    f \wedge v &= \pm \ \phi^\sharp \wedge \dd Z \wedge \dd \bar{Z} \wedge \pi^*(g \wedge u).
  \end{align*}
  
  Now notice that
  \begin{align*}
    \phi^\sharp \wedge \dd Z \wedge \dd \bar{Z} = (Z^* \phi) \wedge \dd Z \wedge \dd \bar{Z} = Z^* \left( \phi \wedge \dd z \wedge \dd \bar{z} \right) = 2i \ Z^* \left( \phi \wedge \dd y \wedge \dd x \right).
  \end{align*}
  We will now assume that $\phi$ is non-negative, and define $\psi_0 \in G^\sigma(\C; \R)$ as
  \begin{align*}
    \psi_0(x + iy) &\dfn \int_{-\infty}^y \phi(x + is) \ \dd s
  \end{align*}
  which clearly satisfies
  \begin{align*}
    \frac{\del \psi_0}{\del y} &= \phi.
  \end{align*}
  A simple calculation also shows that since $E(a,b)$ contains $\supp \phi$ then it also contains $\supp \psi_0$. Letting $\psi \dfn \psi_0 \wedge \dd x \in G^\sigma(\C; \wedge^1 T^* \C)$ we conclude that
  \begin{align*}
    \dd \psi = \dd \psi_0 \wedge \dd x = \frac{\del \psi_0}{\del y} \wedge \dd y \wedge \dd x = \phi \wedge \dd y \wedge \dd x
  \end{align*}
  hence
  \begin{align*}
    f \wedge v &= \pm \ 2i \ Z^* \left( \phi \wedge \dd y \wedge \dd x \right) \wedge \pi^*(g \wedge u)\\
    &= \pm \ 2i \ Z^* \left( \dd \psi \right) \wedge \pi^*(g \wedge u)\\
    &= \pm \ 2i \ \dd (Z^* \psi) \wedge \pi^*(g \wedge u).
  \end{align*}

  We claim that, for the choices above, $Z^* \psi \wedge \pi^*(g \wedge u)$ is compactly supported in $U$. Indeed, since $\supp \psi = \supp \psi_0 \sset E(a,b)$ we have $\supp Z^* \psi \sset Z^{-1}(E(a,b))$ and thus
  \begin{align*}
    (\supp Z^* \psi) \cap (\supp \pi^* u) &\sset \{ (x, t) \in U \st |x - x_0| \leq a, \ t \in \supp u \}
  \end{align*}
  the latter a compact subset of $U$, while the former clearly contains the support of $Z^* \psi \wedge \pi^*(g \wedge u)$, hence our claim. It then follows from Stokes's Theorem that 
  \begin{align*}
    0 = \int \dd \left( Z^* \psi \wedge \pi^*(g \wedge u) \right) =  \int \dd (Z^* \psi) \wedge \pi^*(g \wedge u) \pm \int Z^* \psi \wedge \dd \pi^*(g \wedge u) 
  \end{align*}
  which in turn implies
  \begin{align*}
    \int f \wedge v &= \pm \ 2i \int Z^* \psi \wedge \dd \pi^*(g \wedge u).
  \end{align*}
  But notice that
  \begin{align*}
    Z^* \psi \wedge \dd \pi^*(g \wedge u) &= Z^* \psi \wedge \pi^*(\dd g \wedge u) \pm Z^* \psi \wedge \pi^*(g \wedge \dd u)
  \end{align*}
  where the first summand is zero since $\supp (Z^* \psi) \cap \supp \pi^*(\dd g) = \eset$: this follows from the fact that $\supp \psi \sset E(a,b)$ and $\supp \dd g \sset U_0^-$, and thus implies that
  \begin{align*}
    \int f \wedge v &= \pm \ 2i \int Z^* \psi \wedge \pi^*(g \wedge \dd u).
  \end{align*}

  Now we are going to impose further restrictions on $\phi$. Recall that $\supp \dd u \sset U_0^+ \cap V_0$, meaning that $\Phi(x_0,t) > y_0$ for all $t \in \supp \dd u$: by compactness, there exists $\rho > 0$ such that
  \begin{align*}
    \Phi(x_0, t) &> y_0 + \rho, \quad \forall t \in \supp \dd u.
  \end{align*}
  Once again we shrink $r > 0$ so that $2Ar < \rho$, and thus $y_0 + Ar < -Ar + y_0 + \rho$, which allows us to choose $b,b',b'' \in \R$ such that
  \begin{align*}
    y_0 + Ar \leq b < b' < b'' < -Ar + y_0 + \rho.
  \end{align*}
  If we further assume that
  \begin{align*}
    \supp \phi &\sset \{ x + iy \st |x - x_0| \leq a, \ b \leq y \leq b' \}
  \end{align*}
  then it follows from the definition of $\psi_0$ that 
  \begin{align*}
    y > b' &\impl \psi_0(x + iy) = \psi_0(x + ib'), \quad \forall x \in \R.
  \end{align*}
  For $|x - x_0| \leq a$ and $t \in \supp \dd u$ we then have
  \begin{align*}
    \Phi(x,t) &= \left( \Phi(x,t) - \Phi(x_0,t) \right) + \Phi(x_0,t)\\
    &\geq -A |x - x_0| + \Phi(x_0,t) \\
    &\geq -Aa + y_0 + \rho\\
    &> -Ar + y_0 + \rho\\
    &> b'
  \end{align*}
  which implies that
  \begin{align*}
    \psi_0(Z(x,t)) = \psi_0(x + i \Phi(x,t)) = \psi_0(x + ib')
  \end{align*}
  holds whenever $|x - x_0| \leq a$ and $t \in \supp \dd u$.

  Now recall that $U$ is a cylindrical open set centered at the origin, hence there exists an open interval $I \sset \R$ centered at $0$ such that $U = I \times U_0$. Hence
  \begin{align*}
    C(x) \dfn \psi_0(x + ib') = \int_{-\infty}^{b'} \phi (x + is) \dd s
  \end{align*}
  defines a function $C:I \rarr \R$ which allows us to write
  \begin{align*}
    Z^* \psi = Z^* (\psi_0 \wedge \dd x) = (\psi_0 \circ Z) \wedge \dd (x \circ Z) = C(x) \wedge \dd x
  \end{align*}
  for $(x,t) \in U$ such that $|x - x_0| \leq a$ and $t \in \supp \dd u$. It is also clear that
  \begin{align*}
    \supp \left( Z^* \psi \wedge \pi^*(g \wedge \dd u) \right) &\sset \{ (x,t) \in U \st |x - x_0| \leq a, \ t \in \supp \dd u \}
  \end{align*}
  and therefore
  \begin{align*}
    \int f \wedge v = \pm \ 2i \int Z^* \psi \wedge \pi^*(g \wedge \dd u) = \pm \ 2i \int C(x) \wedge \dd x \wedge \pi^*(g \wedge \dd u) = \pm \ 2i \left( \int C(x) \ \dd x \right) \int g \wedge \dd u
  \end{align*}
  where
  \begin{align*}
    \int C(x) \ \dd x = \int \int_{-\infty}^{b'} \phi (x + is) \ \dd s \ \dd x = \int_\C \phi \neq 0
  \end{align*}
  if we assume $\phi$ nonzero: equivalence~(9.5) from~\cite{araujo18} is proven.

  We now turn to the second part of the statement: we will prove that if we shrink $a > 0$ as well as the difference $b' - b > 0$ (but keeping $b$ fixed) then there exists $H \in \OO(\C)$ such that
  \begin{align*}
    \text{$\Re H \leq 0$ in $Z(\supp f)$}, & \quad \text{$\Re H > 0$ in $Z(\supp \dd' v)$}.
  \end{align*}
  Recall that $\supp f \sset \supp F$, and from~\eqref{eq:dfn_F} and~\eqref{eq:dfn_phisharp} we have
  \begin{align*}
    \supp F \sset \supp \phi^\sharp \cap \supp \pi^* g = Z^{-1}(\supp \phi) \cap \pi^{-1}(\supp g)\\
  \end{align*}
  and thus
  \begin{align*}
    Z(\supp F) \sset \supp \phi \cap Z(\pi^{-1}(\supp g)) \sset \supp \phi \sset \{ x + iy \in \C \st |x - x_0| \leq a, \ b \leq y \leq b' \}.
  \end{align*}
  We denote by $\mathcal{R}$ the latter set above, and also define the quantities
  \begin{align*}
    M &\dfn \max \left\{ \Phi(x,t) \st |x - x_0| \leq r, \ t \in \supp \dd u \right\} \\
    M_+ &\dfn \max \left\{ \Phi(x,t) \st \frac{r}{2} \leq |x - x_0| \leq r, \ t \in \supp u \right\} \\
    M_- &\dfn \min \left\{ \Phi(x,t) \st \frac{r}{2} \leq |x - x_0| \leq r, \ t \in \supp u \right\} 
  \end{align*}
  as well as the following subsets of the complex plane
  \begin{align*}
    \mathcal{A} &\dfn \left\{ x + iy \in \C \st |x - x_0| \leq r, \ b'' \leq y \leq M \right\}\\
    \mathcal{B} &\dfn \left\{ x + iy \in \C \st \frac{r}{2} \leq |x - x_0| \leq r, \ M_- \leq y \leq M_+ \right\}.
  \end{align*}
  We claim that $Z(\supp \dd' v) \sset \mathcal{A} \cup \mathcal{B}$. In order to check this, notice first that since $v$ is a section of $\Lambda^{1, n - q}$ we have
  \begin{align*}
    \dd' v = \dd v  \dd \left( \tilde{\chi} \wedge \dd Z \wedge \pi^* u \right) = \dd \tilde{\chi} \wedge \dd Z \wedge \pi^* u - \tilde{\chi} \wedge \dd Z \wedge \pi^* (\dd u)
  \end{align*}
  hence, clearly,
  \begin{align*}
    \supp \dd' v &\sset \left( \supp \dd \tilde{\chi} \cap \supp \pi^* u \right) \cup \left( \supp \tilde{\chi} \cap \supp \pi^* (\dd u) \right).
  \end{align*}
  On the other hand
  \begin{align*}
    \supp \tilde{\chi} &\sset \{ (x,t) \in \R \times \R^n \st |x - x_0| \leq r \}\\
    \supp \dd \tilde{\chi} &\sset \left\{ (x,t) \in \R \times \R^n \st \frac{r}{2} \leq |x - x_0| \leq r \right\}\\
    \supp \pi^* u &\sset \{ (x,t) \in U \st t \in \supp u \}\\
    \supp \pi^*(\dd u) &\sset \{ (x,t) \in U \st t \in \supp \dd u \}
  \end{align*}
  which, together, ensure that $\supp \dd' v$ is contained in the union of the sets below:
  \begin{align*}
     S_1 &\dfn \left \{ (x,t) \in U \st \frac{r}{2} \leq |x - x_0| \leq r, \ t \in \supp u \right\} \\
     S_2 &\dfn \{ (x,t) \in U \st |x - x_0| \leq r, \ t \in \supp \dd u \}.
  \end{align*}
  Clearly, $Z$ maps $S_1$ into $\mathcal{B}$. Also, if $(x,t) \in S_2$ we have
  \begin{align*}
    \Phi(x,t) &= \left( \Phi(x,t) - \Phi(x_0,t) \right) + \Phi(x_0,t)\\
    &\geq -A |x - x_0| + \Phi(x_0,t) \\
    &> -Ar + y_0 + \rho\\
    &> b''
  \end{align*}
  and from the definitions of $Z$, $M$, $S_2$ and $\mathcal{A}$ we have $Z(x,t) \in \mathcal{A}$, proving our claim. 
  
  \begin{figure}
    \begin{center}
      \includegraphics[scale=0.3]{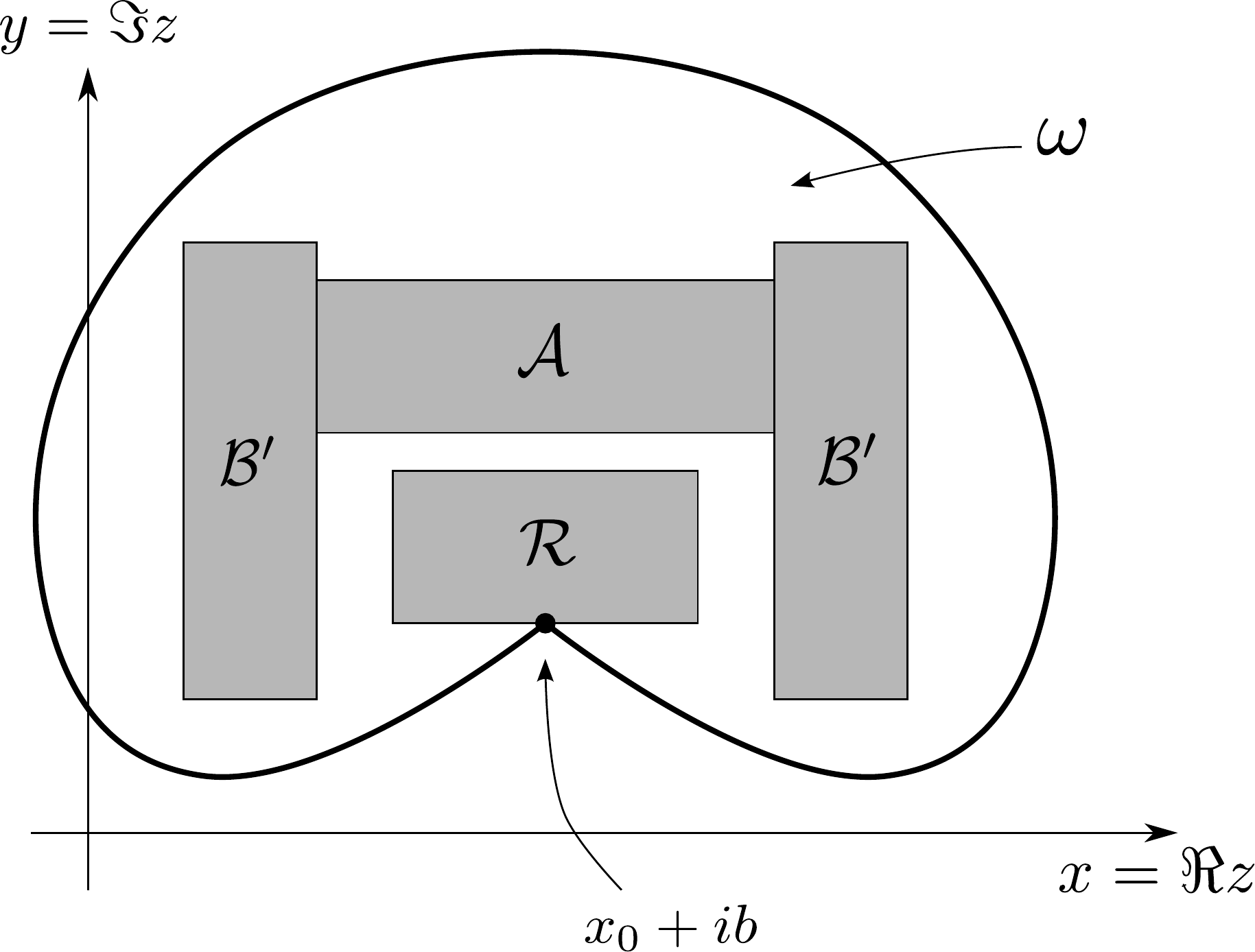}
    \end{center}
    \caption{The compact sets $\mathcal{H} = \mathcal{A} \cup \mathcal{B}'$ and $\mathcal{R}$, which are disjoint; and the open set $\omega$, which contains both of them.}
    \label{fig:horseshoe}
  \end{figure}
  
  \begin{figure}
    \begin{center}
      \includegraphics[scale=0.3]{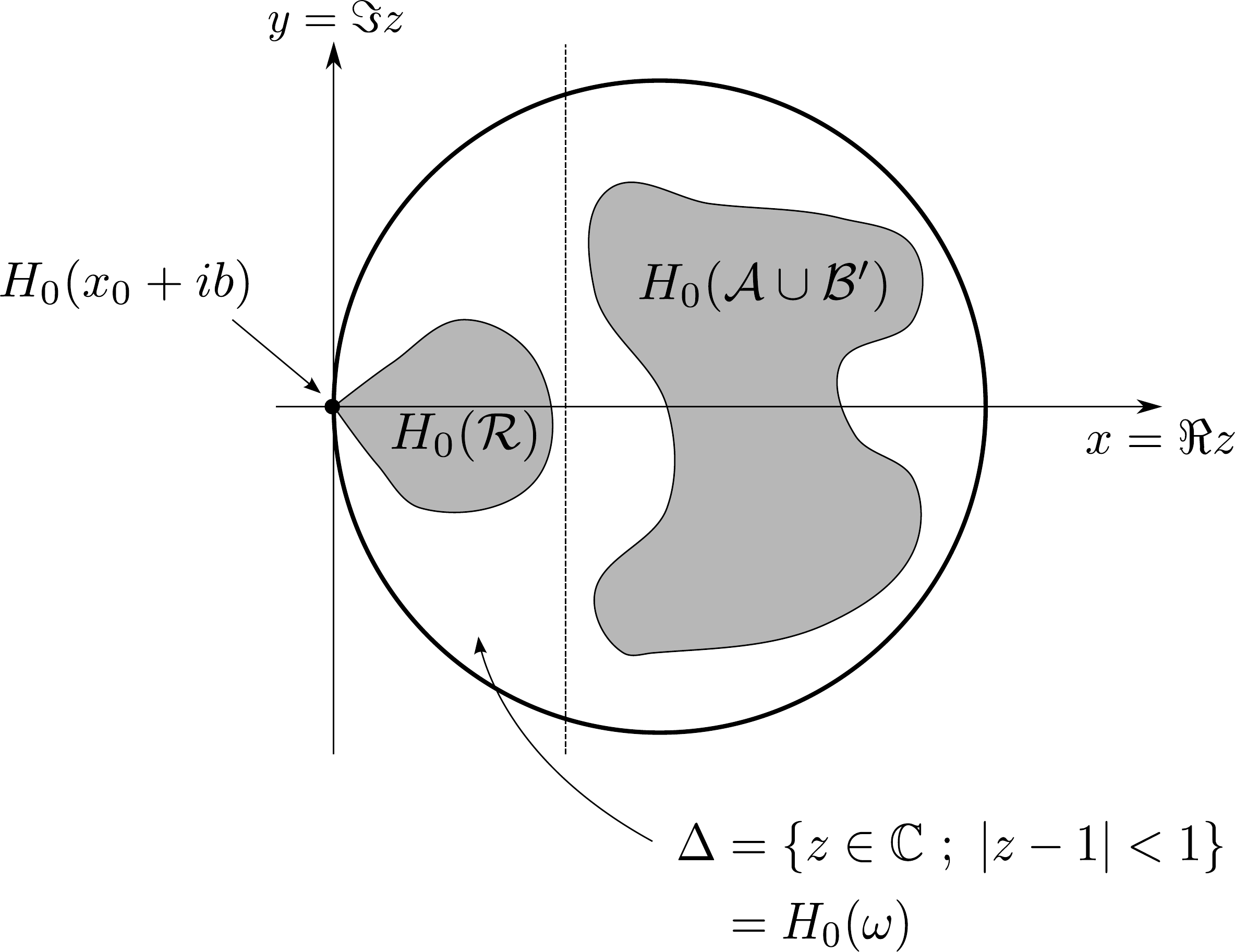}
    \end{center}
    \caption{The scheme presented in Figure~\ref{fig:horseshoe}, now deformed by the homeomorphism $H_0$.}
    \label{fig:horseshoe_deformed}
  \end{figure}

  For a better visualization of the argument, we define the sets 
  \begin{align*}
    \mathcal{B}' &\dfn \left\{ x + iy \in \C \st \frac{r}{2} \leq |x - x_0| \leq r, \ \min \{ M_-,b \} \leq y \leq \max \{ M_+, M \} \right\}
  \end{align*}
  (which contains $\mathcal{B}$) and $\mathcal{H} \dfn \mathcal{A} \cup \mathcal{B}'$ which, on the one hand, contains $\mathcal{A} \cup \mathcal{B}$, and, on the other hand, does not intercept $\mathcal{R}$ (see Figure~\ref{fig:horseshoe}). It is clear that there exists a bounded open set $\omega \sset \C$, which is connected and simply connected, such that:
  \begin{enumerate}
  \item it contains $\mathcal{A} \cup \mathcal{B}$ and $\mathcal{R}$, except for the point $x_0 + ib \in \del \mathcal{R}$;
  \item its boundary is a Jordan curve that contains the point $x_0 + ib$; and
  \item $\C \setminus \overline{\omega}$ is connected.
  \end{enumerate}
  Let $\Delta \sset \C$ stand for the unit open disc centered at $1$: a result due to Carath{\'e}odory ensures the existence of a homeomorphism $H_0: \overline{\omega} \rarr \overline{\Delta}$ which is a biholomorphism between interiors, and we can assume without loss of generality that $H_0(x_0 + ib) = 0$ (see Figure~\ref{fig:horseshoe_deformed}). In particular, $\Re H_0(z) > 0$ for every $z \in \overline{\omega}$ except for $z = x_0 + ib$. Since $\mathcal{H} \sset \omega$ is a compact set, there exists $c > 0$ such that 
  \begin{align*}
    \Re H_0 &> 2c \ \text{in $\mathcal{H}$}.
  \end{align*}
  Also, if we further shrink $a$ and choose $b'$ sufficiently close to $b$ (so that $\mathcal{R}$ is ``thin'' in the $y$-direction) then 
  \begin{align*}
    \Re H_0 &< \frac{c}{4} \ \text{in $\mathcal{R}$}.
  \end{align*}
  
  Finally, Mergelyan's Theorem allows us to approximate $H_0$ by an entire function $H_1$ such that
  \begin{align*}
    \Re H_1 &> \frac{3c}{2} \ \text{in $\mathcal{H}$} \\
    \Re H_1 &< \frac{c}{2} \ \text{in $\mathcal{R}$}
  \end{align*}
  thus setting $H \dfn H_1 - c$ finishes the proof. \qed

\def\cprime{$'$}

\end{document}